\newtheorem{Theoremx}{Theorem}
\newtheorem{theorem}{Theorem}[section]
\newtheorem{lemma}[theorem]{Lemma}
\newtheorem{corollary}[theorem]{Corollary}
\newtheorem{question}{Question}
\theoremstyle{definition}
\theoremstyle{remark}
\newcommand{\Cl}{\operatorname{Cl}}
\newcommand{\Spec}{\operatorname{Spec}}
\newcommand{\Tor}{\operatorname{Tor}}
\newcommand{\Hom}{\operatorname{Hom}}
\newcommand{\e}{\operatorname{e_{HK}}}
\newcommand{\fm}{\mathfrak{m}}
\begin{document}
\title{A Theorem about maximal Cohen-Macaulay modules}

\author[Thomas Polstra]{Thomas Polstra}
\thanks{Polstra was supported in part by NSF Postdoctoral Research Fellowship DMS $\#1703856$.}
\address{Department of Mathematics, University of Utah, Salt Lake City, UT 84102 USA}
\email{polstra@math.utah.edu}


\begin{abstract} 
It is shown that for any local strongly $F$-regular ring there exists natural number $e_0$ so that if $M$ is any finitely generated maximal Cohen-Macaulay module, then the pushforward of $M$ under the $e_0$th iterate of the Frobenius endomorphism contains a free summand. Consequently, the torsion subgroup of the divisor class group of a local strongly $F$-regular ring is finite.
\end{abstract}

\maketitle

\section{Introduction}\label{Section introduction}

Let $R$ be a Noetherian commutative ring of prime characteristic $p>0$ and for each $e\in\mathbb{N}$ let $F^e\colon R\to R$ be the $e$th iterate of the Frobenius endomorphism of $R$. Throughout this article we assume $R$ is $F$-finite, i.e. the Frobenius endomorphism is a finite map. If $M$ is an $R$-module then $F^e_*M$ is the $R$-module obtained from $M$ via restriction of scalars under $F^e$.  The ring $R$ is said to be strongly $F$-regular if for each nonzero $r\in R$ there exists an $e\in\mathbb{N}$ and $\varphi\in \Hom_R(F^e_*R,R)$ such that $\varphi(F^e_*r)=1$. Every local strongly $F$-regular ring is a Cohen-Macaulay normal domain. In particular, studying the class of Cohen-Macaulay modules in a strongly $F$-regular ring is a warranted venture. The main contribution of this article is the following uniform property concerning the class of finitely generated Cohen-Macaulay modules in strongly $F$-regular rings. 

\begin{Theoremx}
\label{Main result}
Let $(R,\fm,k)$ be a local $F$-finite and strongly $F$-regular ring of prime characteristic $p>0$. There exists an $e_0\in \mathbb{N}$ so that if $M$ is any finitely generated maximal Cohen-Macaulay $R$-module then there exists an onto $R$-linear map $F^{e_0}_*M\to R$, i.e. $R$ can be realized as direct summand of $F^{e_0}_*M$.
\end{Theoremx}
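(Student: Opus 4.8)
The plan is to reformulate the desired splitting as the positivity of a single Frobenius free-rank number, and then to force that positivity \emph{uniformly} by combining the strict positivity of the $F$-signature with a convergence estimate whose error term is controlled, for maximal Cohen-Macaulay modules, by the multiplicity of $R$. First I would record the elementary splitting criterion: for any finitely generated $R$-module $N$, the ring $R$ is a direct summand of $N$ if and only if there is a surjection $N\to R$, equivalently, since $R$ is local, $\sum_{\varphi\in\Hom_R(N,R)}\varphi(N)=R$. Writing $\frk(N)$ for the free rank of $N$ (the largest $a$ with $N\cong R^{a}\oplus N'$), this says precisely $\frk(N)\ge 1$. Thus the theorem is equivalent to producing an $e_0$, independent of $M$, with $\frk(F^{e_0}_*M)\ge 1$ for every maximal Cohen-Macaulay $M$. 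Set $c_e:=\rank_R F^e_*R$, a fixed quantity growing like $p^{e\alpha}$ with $\alpha=\dim R+\log_p[k:k^{1/p}]\ge\dim R$, so in particular $c_e\ge 1$.

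Next I would bring in the $F$-signature. By Aberbach--Leuschke the strong $F$-regularity of $R$ is equivalent to $s(R):=\lim_e \frk(F^e_*R)/c_e>0$. I would then show that for every finitely generated torsion-free $M$ the limit $s(M):=\lim_e \frk(F^e_*M)/c_e$ exists and equals $\rank(M)\,s(R)$: comparing $M$ with a free module of the same rank through the inclusion of a generic free submodule $R^{\rank(M)}\hookrightarrow M$ whose cokernel $T$ has $\dim T<\dim R$, one has $\frk(F^e_*T)=o(c_e)$, while $\frk$ is additive on the sequence up to this error. In particular $s(M)\ge s(R)>0$ for every nonzero $M$, because $\rank(M)\ge 1$.

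The engine of the proof is a uniform convergence estimate: there exist a constant $C=C(R)$ and a $\gamma>0$, depending only on $R$, such that
\[
\Big|\frac{\frk(F^e_*M)}{c_e}-s(M)\Big|\le \frac{C\,\mu(M)}{p^{e\gamma}}\qquad\text{for all }e\text{ and all finitely generated }M,
\]
where $\mu(M)$ is the minimal number of generators of $M$. This is where the bulk of the work lies, and it is the main obstacle, since one must control $\frk(F^e_*M)$ across the infinite family of all modules simultaneously. I would obtain it by bounding the increments $\frk(F^{e+1}_*M)-p^{\alpha}\frk(F^e_*M)$ by lengths of Frobenius pushforwards of modules supported on the singular locus of $R$, and then summing the resulting geometric series; the uniformity of $C$ comes from a single finite filtration refining the singular locus, into which the dependence on $M$ enters only linearly through $\mu(M)$.

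Finally I would invoke the maximal Cohen-Macaulay hypothesis through the standard bound $\mu(M)\le e(\fm,R)\,\rank(M)$, where $e(\fm,R)$ is the multiplicity of $R$; this is exactly the step where ``maximal Cohen-Macaulay'' is indispensable, as it converts the generator count into the rank, which is what the $F$-signature sees. Combining the estimate with $s(M)=\rank(M)\,s(R)$ and this bound gives
\[
\frk(F^e_*M)\ \ge\ c_e\,\rank(M)\Big(s(R)-\frac{C\,e(\fm,R)}{p^{e\gamma}}\Big)\ \ge\ c_e\Big(s(R)-\frac{C\,e(\fm,R)}{p^{e\gamma}}\Big),
\]
using $c_e\ge 1$ and $\rank(M)\ge 1$. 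Choosing $e_0$ so large that $p^{e_0\gamma}>C\,e(\fm,R)/s(R)$ makes the right-hand side strictly positive, whence the nonnegative integer $\frk(F^{e_0}_*M)$ is at least $1$ for every maximal Cohen-Macaulay $M$ at once. Since $e_0$ depends only on the invariants $s(R)$, $e(\fm,R)$, $C$, and $\gamma$ of $R$, this is the required uniformity.
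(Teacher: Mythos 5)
Your proposal is correct in outline but takes a genuinely different route from the paper. The paper's proof is a short duality argument: it embeds an arbitrary maximal Cohen--Macaulay module $M$ into $R(K_X)^{\oplus n}$ by dualizing a presentation of $\Hom_R(M,R(K_X))$ into the canonical module, uses maximal Cohen--Macaulayness of the cokernel to get $(\underline{x})M=M\cap(\underline{x})R(K_X)^{\oplus n}$ for a system of parameters $\underline{x}$, and applies Chevalley's lemma once, to the single fixed module $R(K_X)$, to find $e_0$ with $I_{e_0}(R(K_X))\subseteq(\underline{x})R(K_X)$; uniformity over all $M$ is automatic because every $M$ is compared to that one module, and the argument yields the sharper Theorem~\ref{Cohen-Macaulay Lemma} (every minimal generator of $M$ maps to $1$), which the paper then uses to \emph{rederive} Aberbach--Leuschke. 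Your route inverts that logical order: you take $s(R)>0$ (Aberbach--Leuschke) and $s(M)=\rank(M)\,s(R)$ (Tucker) as inputs, and all the uniformity is carried by your ``engine'' estimate $\bigl|\frk(F^e_*M)/c_e-s(M)\bigr|\le C\mu(M)/p^{e\gamma}$ together with $\mu(M)\le e(\fm,R)\rank(M)$ for maximal Cohen--Macaulay modules --- which is indeed exactly where the hypothesis must enter. That engine is true, and is essentially the uniform convergence theorem of Polstra--Tucker (cited here as \cite{PolstraTucker}), so your argument does close up; but it is a substantial theorem in its own right, your proposal only gestures at its proof, and the normalization in the increment you propose to bound should be $\frk(F^{e+1}_*M)-p^{\gamma}\frk(F^e_*M)$ with $p^{\gamma}=c_{e+1}/c_e$ rather than $p^{\alpha}$. (Also note, for completeness, that a nonzero maximal Cohen--Macaulay module over the domain $R$ is automatically torsion-free of positive rank, which you use implicitly.) The trade-off: your approach exhibits the theorem as a formal consequence of uniform $F$-signature convergence and would transport to any setting where such estimates hold, whereas the paper's approach is elementary and self-contained, avoids the $F$-signature machinery entirely (deriving its positivity as a corollary instead of assuming it), and produces the minimal-generator refinement that the paper's later corollaries require.
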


More precisely, we show any minimal generator of a maximal Cohen-Macaulay module $M$ can be sent to $1$ in $R$ under an $R$-linear map $F^{e_0}_*M\to R$ in Theorem~\ref{Cohen-Macaulay Lemma}. A new application of Theorem~\ref{Main result} is that the torsion subgroup of the divisor class group of any local strongly $F$-regular ring is finite, see Corollary~\ref{corollary finite torsion subgroup}. When coupled with results originating from \cite{Carvajal-RojasSchwedeTucker,Carvajal-Rojas}, it is now known that the torsion group of the divisor class group of a strongly $F$-regular ring is finite and every torsion divisor has order bounded by the reciprocal of the $F$-signature of $R$.

We utilize Theorem~\ref{Main result} to rederive two other important  properties of local strongly $F$-regular rings. First and foremost we reprove that the divisor class group of a $2$-dimensional strongly $F$-regular ring is finite in Corollary~\ref{corollary Lipman}. It is important to note that our proof of this property does not require an understanding of resolution of singularities of excellent surfaces with at worst rational singularities by quadratic transforms. We also show that the $F$-signature of a local strongly $F$-regular ring is positive, recapturing the main result of \cite{AberbachLeuschke} in an elementary, novel, and streamlined fashion.

\section{Preliminary results and notation}\label{Section prelims}

 Suppose $(R,\fm,k)$ is a local $F$-finite strongly $F$-regular ring. Every strongly $F$-regular ring is a normal domain and therefore has a well-defined divisor class group on $X=\Spec(R)$. For the sake of convenience, we recall several elementary properties concerning divisors in normal rings of prime characteristic. For each Weil divisor $D$ on $X$ we write $R(D)$ for the global sections of $\mathcal{O}_X(D)$. We refer to $R(D)$ as a divisorial ideal. Every divisorial ideal is a rank $1$ module satisfying Serre's condition $(S_2)$. Conversely, every rank $1$ module satisfying Serre's condition $(S_2)$ is isomorphic to a divisorial ideal.  Suppose $D_1, D_2$ are Weil divisors on $X$. Since $X$ is affine, the divisors $D_1, D_2$ are linearly equivalent if and only if $R(D_1)\cong R(D_2)$. The module $\Hom_R(R(D_1),R(D_2))$ is isomorphic to the divisorial ideal $R(D_2-D_1)$. Moreover, the reflexification\footnote{The reflexification of an $R$-module $M$ is the module $\Hom_R(\Hom_R(M,R),R)$. If $R$ is normal and $M$ is a finitely generated $(S_2)$ module then the natural map $M\to \Hom_R(\Hom_R(M,R),R)$ is an isomorphism, \cite[Theorem~1.9]{Hartshorne}.} of $R(D_1)\otimes_R R(D_2)$ is isomorphic to $R(D_1+D_2)$. In particular, since reflexification commutes with restriction of scalars under Frobenius, the reflexification of $(F^{e}_*R(D_1))\otimes_R R(D_2)$ is isomorphic to $F^{e}_*R(D_1+p^{e}D_2)$.

 Now suppose that $(R,\fm,k)$ is a Cohen-Macaulay normal $F$-finite domain, e.g. strongly $F$-regular, of prime characteristic $p>0$. Every $F$-finite ring is the homomorphic image of a regular ring by \cite[Rem.~13.6]{Gabber} and therefore has a canonical module $\omega_R$. Because the canonical module of a normal domain is $(S_2)$ and rank $1$ we have that $\omega_R\cong R(K_X)$ for some Weil divisor $K_X$. We refer to such a divisor as a canonical divisor. 
 

 The following lemma is characteristic-free and is an observation that $(S_2)$-modules satisfy a Krull-Schmidt type condition for rank $1$-summands.

\begin{lemma}
\label{Lemma Krull-Schmidt for rank 1 summands}
Let $(R,\fm,k)$ be a normal local domain. Let $C$ be a finitely generated $(S_2)$-module, $M$ a rank $1$ module, and suppose that $C\cong M^{\oplus a_1}\oplus N_1\cong M^{\oplus a_2}\oplus N_2$ are choices of direct sum decompositions of $C$ so that $M$ cannot be realized as a direct summand of either $N_1$ or $N_2$. Then $a_1=a_2$.
\end{lemma}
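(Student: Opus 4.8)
The plan is to convert the problem into the well-definedness of the number of free summands, after applying the functor $\Hom_R(M,-)$. First I would note that since $C$ is reflexive (being $(S_2)$ over the normal domain $R$) and reflexivity passes to direct summands, the modules $M, N_1, N_2$ are all reflexive; in particular $M\cong R(D)$ for some Weil divisor $D$, and $\Hom_R(M,M)\cong R(D-D)=R$, so $\mathrm{End}_R(M)$ is the local ring $R$. If $a_1=a_2=0$ there is nothing to prove, so I may assume $M$ is a genuine summand of $C$.

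The first tool is a characterization of free summands over the local ring $R$. For a finitely generated $R$-module $P$, the evaluation pairing $\Hom_R(P,R)\times P\to R$, $(f,x)\mapsto f(x)$, reduces modulo $\fm$ to a $k$-bilinear pairing $\bar\beta_P\colon (\Hom_R(P,R)\otimes_R k)\times(P\otimes_R k)\to k$. The key elementary point is that a complementary summand $P'$ has \emph{no} free summand if and only if $\bar\beta_{P'}=0$: if some value $\overline{f(x)}$ is nonzero then $f(x)$ is a unit and $f$ together with $1\mapsto x$ splits off a copy of $R$, and conversely a free summand produces a dual-basis block of full rank. Writing $P\cong R^{a}\oplus P'$ with $P'$ free-summand-free, block-diagonality of $\bar\beta_P$ gives $\rank_k\bar\beta_P=a$. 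Hence the free rank $\frk(P)$, the maximal number of free summands of $P$, is well defined and equals $\rank_k\bar\beta_P$.

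The second and central tool is the claim that, for reflexive $N$, the module $\Hom_R(M,N)$ has a free $R$-summand if and only if $M$ is a direct summand of $N$. The direction ``$M$ a summand $\Rightarrow$ free summand'' is immediate from $\Hom_R(M,M)\cong R$. For the converse I would use the natural composition pairing to build an isomorphism $\Hom_R(\Hom_R(M,N),R)\cong\Hom_R(N,M)$: both sides are reflexive, and at every height-one prime $M$ becomes invertible, so there the composition map $\psi\mapsto(\phi\mapsto\psi\circ\phi)$ is an isomorphism; since a homomorphism of reflexive modules over a normal domain that is an isomorphism in codimension one is itself an isomorphism (each side equals the intersection of its localizations at height-one primes), the composition pairing is a global isomorphism. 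A free summand of $\Hom_R(M,N)$ yields, by the first tool, a functional $\Psi$ and an element $\phi$ with $\Psi(\phi)$ a unit; transporting $\Psi$ to $\psi\in\Hom_R(N,M)$ makes $\psi\circ\phi\in\mathrm{End}_R(M)=R$ a unit, so $\phi$ and a scalar multiple of $\psi$ exhibit $M$ as a summand of $N$.

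Finally, I would apply $\Hom_R(M,-)$ to both decompositions, obtaining $\Hom_R(M,C)\cong R^{a_i}\oplus\Hom_R(M,N_i)$ for $i=1,2$. Since $M$ is not a summand of $N_i$, the second tool shows $\Hom_R(M,N_i)$ has no free summand, whence $\frk(\Hom_R(M,C))=a_i$ for each $i$; the well-definedness from the first tool forces $a_1=a_2$. The main obstacle I anticipate is the second tool: producing the natural isomorphism $\Hom_R(\Hom_R(M,N),R)\cong\Hom_R(N,M)$ compatibly with evaluation, and justifying the reduction to codimension one, since this is precisely where normality, reflexivity, and $\mathrm{End}_R(M)\cong R$ all enter.
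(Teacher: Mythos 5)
Your proof is correct. It shares the paper's basic strategy---trade summands isomorphic to $M$ for free summands by Hom-ing against $M\cong R(D)$, then invoke well-definedness of the number of free summands over the local ring---but it implements both key steps differently. The paper applies the contravariant functor $\Hom_R(-,R(D))$ and justifies the equivalence ``$R(D)$ is a summand of $N$ if and only if $\Hom_R(N,R(D))$ has a free summand'' via the biduality map $N\to \Hom_R(\Hom_R(N,R(D)),R(D))$ being an isomorphism (checked in codimension one, then globalized using reflexivity); you apply the covariant functor $\Hom_R(M,-)$ and prove the corresponding equivalence via the composition pairing $\Hom_R(N,M)\cong \Hom_R(\Hom_R(M,N),R)$ together with $\operatorname{End}_R(M)\cong R$, checked the same way. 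These are essentially mirror images of each other and of comparable difficulty. The more substantive divergence is the final step: the paper obtains well-definedness of the maximal number of free summands by passing to the completion and invoking the Krull--Schmidt property of complete local rings, whereas you prove it directly by identifying the free rank of a module $P$ with the rank of the evaluation pairing $\Hom_R(P,R)\times P\to R$ reduced modulo $\fm$. Your route is more elementary and self-contained (no completion, no appeal to uniqueness of direct sum decompositions), at the cost of a somewhat longer setup; the paper's is shorter but rests on a standard, nontrivial external fact.
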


\begin{proof}
Without loss of generality we may assume $a_1>0$ or $a_2>0$. Because $C$ satisfies Serre's condition $(S_2)$, the rank $1$ module $M$ is $(S_2)$, and therefore $M\cong R(D)$ for some Weil divisor $D$. The divisorial ideal $R(D)$ is a direct summand of a finitely generated $(S_2)$ module $N$ if and only if $R$ is a direct summand $\Hom_R(N,R(D))$. Indeed, this follows from the fact that the natural map $N\to \Hom_R(\Hom_R(N,R(D)),R(D))$ is an isomorphism on the regular locus and therefore an isomorphism globally by \cite[Proposition~1.11]{Hartshorne}. It follows that $\Hom_R(C,R(D))\cong R^{\oplus a_1}\oplus \Hom_R(N_1,R(D))\cong R^{\oplus a_2}\oplus \Hom_R(N_2,R(D))$ are choices of direct sum decompositions of $\Hom_R(C,R(D))$ where neither $\Hom_R(N_1,R(D))$ nor $\Hom_R(N_2,R(D))$ have a free summand. In other words, $a_1$ and $a_2$ are the maximal number of free summands appearing in choices of direct sum decompositions of $\Hom_R(C,R(D))$. But such numbers can be computed after completion and hence $a_1=a_2$ since complete local rings satisfy the Krull-Schmidt condition. \end{proof}

\begin{corollary}
\label{Corollary Krull-Schmidt for rank 1 summands}
Let $(R,\fm,k)$ be a local normal domain and $C$ a finitely generated $(S_2)$-module. If $D_1,D_2,\ldots, D_t$ are divisors representing distinct elements of the divisor class group of $R$ and $R(D_i)$ is a direct summand of $C$ for each $1\leq i \leq t$, then $R(D_1)\oplus \cdots \oplus R(D_t)$ is a direct summand of $C$.
\end{corollary}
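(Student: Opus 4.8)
The plan is to induct on $t$, the key input being the correspondence recorded in the proof of Lemma~\ref{Lemma Krull-Schmidt for rank 1 summands}: a divisorial ideal $R(D)$ is a direct summand of a finitely generated $(S_2)$-module $P$ if and only if $R$ is a direct summand of $\Hom_R(P,R(D))$. The case $t=1$ is the hypothesis. For the inductive step, assume $R(D_1)\oplus\cdots\oplus R(D_{t-1})$ is a direct summand of $C$, and write $C\cong R(D_1)\oplus\cdots\oplus R(D_{t-1})\oplus N$ for some finitely generated module $N$, which is again $(S_2)$ as a direct summand of $C$. It suffices to prove that $R(D_t)$ is a direct summand of $N$: granting this, $N\cong R(D_t)\oplus N'$ yields $C\cong R(D_1)\oplus\cdots\oplus R(D_t)\oplus N'$ and closes the induction.

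To exhibit $R(D_t)$ as a summand of $N$, I would apply $\Hom_R(-,R(D_t))$ to the displayed decomposition of $C$. Since $\Hom_R$ converts finite direct sums in the first variable into direct sums and $\Hom_R(R(D_i),R(D_t))\cong R(D_t-D_i)$, this gives
\[
\Hom_R(C,R(D_t))\cong\bigoplus_{i=1}^{t-1}R(D_t-D_i)\ \oplus\ \Hom_R(N,R(D_t)).
\]
Because the classes of $D_1,\dots,D_t$ are pairwise distinct, each $D_t-D_i$ with $i<t$ is a nonzero class, so $R(D_t-D_i)$ is a rank $1$ module not isomorphic to $R$; being torsion-free of rank $1$, it admits no free direct summand. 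On the other hand, since $R(D_t)$ is a direct summand of $C$, the correspondence above shows $R$ is a direct summand of $\Hom_R(C,R(D_t))$.

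The decisive step is to transfer this free summand onto the last factor, for which I would use the invariant $\frk(-)$, the maximal number of free direct summands of a finitely generated module. As in the proof of Lemma~\ref{Lemma Krull-Schmidt for rank 1 summands}, $\frk$ is insensitive to completion (a finitely generated module over the local ring $R$ has $R$ as a summand exactly when its completion has $\hat R$ as a summand) and, because $\hat R$ satisfies Krull-Schmidt, $\frk$ is additive on finite direct sums. Applying additivity to the displayed isomorphism gives
\[
1\le\frk\big(\Hom_R(C,R(D_t))\big)=\sum_{i=1}^{t-1}\frk\big(R(D_t-D_i)\big)+\frk\big(\Hom_R(N,R(D_t))\big)=\frk\big(\Hom_R(N,R(D_t))\big),
\]
the last equality because each $\frk(R(D_t-D_i))=0$. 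Thus $R$ is a direct summand of $\Hom_R(N,R(D_t))$, and the correspondence then shows $R(D_t)$ is a direct summand of $N$, as required. The only point demanding care is the additivity and completion-invariance of $\frk$; once these are in hand, the result is a formal consequence of the $\Hom$ correspondence from the lemma.
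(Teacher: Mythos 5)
Your proof is correct and follows essentially the same route as the paper: the same induction, the same Hom-into-$R(D_t)$ duality, the same observation that a rank-one torsion-free module not isomorphic to $R$ has no free summand, and the same Krull--Schmidt-after-completion input. The only cosmetic difference is that you inline that last input as additivity of the free rank $\frk$, whereas the paper packages it as Lemma~\ref{Lemma Krull-Schmidt for rank 1 summands} and then uses locality of $R$ to isolate a single offending factor $R(D_j-D_{i+1})$.
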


\begin{proof}
Let $1\leq i\leq t-1$ and suppose we have found direct sum decomposition 
\[
C\cong R(D_1)\oplus \cdots \oplus R(D_i)\oplus N.
\]
We claim that $R(D_{i+1})$ is a direct summand of $N$. By Lemma~\ref{Lemma Krull-Schmidt for rank 1 summands} it is enough to show that $R(D_{i+1})$ is not a summand of $R(D_1)\oplus \cdots \oplus R(D_i)$. By Hom-ing into $R(D_{i+1})$ it is enough to show $R$ is not a direct summand of $R(D_1-D_{i+1})\oplus \cdots \oplus R(D_i-D_{i+1})$. Suppose by way of contradiction that $R(D_1-D_{i+1})\oplus \cdots \oplus R(D_i-D_{i+1})$ had $R$ as a summand. Then there exists an onto $R$-linear map $R(D_1-D_{i+1})\oplus \cdots \oplus R(D_i-D_{i+1})\to R$. Since $R$ is local there exists $1\leq j\leq i$ so that the image of $R(D_j-D_{i+1})$ in $R$ contains a unit. Hence $R$ is a direct summand of $R(D_j-D_{i+1})$. By rank considerations $R(D_j-D_{i+1})\cong R$, i.e. $D_j$ and $D_{i+1}$ are linearly equivalent divisors, a contradiction to our assumption that $D_j$ and $D_{i+1}$ represent distinct elements of the divisor class group.
\end{proof}

We return to the assumption that $(R,\fm,k)$ is a local ring of prime characteristic $p>0$. If $M$ is a finitely generated $R$-module and $e\in \mathbb{N}$ then we let 
\[
I_e(M)=\{\eta\in M\mid \varphi(F^e_*\eta)\in \fm , \forall \varphi\in\Hom_R(F^e_*M,R)\}.
\]
Observe that an element $\eta\in M$ avoids $I_e(M)$ if and only if there exists $\varphi\in \Hom_R(F^e_*M,R)$ so that $\varphi(F^e_*\eta)=1$. If $M=R$ then we let $I_e=I_e(R)$. The ring $R$ is strongly $F$-regular if and only if $\bigcap_{e\in\mathbb{N}}I_e=0$. We record some basic properties of these subsets of $M$.

\begin{lemma}
\label{splitting ideals} Let $(R,\fm,k)$ be a local $F$-finite ring of prime characteristic $p>0$ and $M$ a finitely generated $R$-module.
\begin{enumerate}
\item The sets $I_e(M)$ form submodules of $M$.
\item For each $e\in \mathbb{N}$ there is an inclusion $\fm^{[p^e]}M\subseteq I_e(M)$.
\item If $e\geq e'$ then $I_{e}(M)\subseteq I_{e'}(M)$.
\item If $R$ is strongly $F$-regular and $M$ is torsion-free then $\bigcap_{e\in \mathbb{N}} I_e(M)=0$.
\end{enumerate}
\end{lemma}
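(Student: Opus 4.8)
The common mechanism I would exploit in all four parts is that restriction of scalars along Frobenius converts an ordinary multiplication on $M$ into an $R$-linear operation on $F^e_*M$, and that $\Hom_R(F^e_*M,R)$ is stable under precomposition with such operations and under postcomposition with $R$-linear maps out of the target. For part (1), additivity of $I_e(M)$ is immediate, since $F^e_*(\eta+\eta')=F^e_*\eta+F^e_*\eta'$ and $\fm$ is closed under addition. For closure under scalars I would note that, for fixed $r\in R$, the map $\mu_r\colon F^e_*M\to F^e_*M$ sending $F^e_*m\mapsto F^e_*(rm)$ is $R$-linear: the element $s\cdot F^e_*m=F^e_*(s^{p^e}m)$ is carried to $F^e_*(rs^{p^e}m)=s\cdot F^e_*(rm)$. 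Hence for any $\varphi\in\Hom_R(F^e_*M,R)$ the composite $\varphi\circ\mu_r$ again lies in $\Hom_R(F^e_*M,R)$ and sends $F^e_*\eta$ to $\varphi(F^e_*(r\eta))$; if $\eta\in I_e(M)$ this lands in $\fm$, so $r\eta\in I_e(M)$. For part (2) it suffices, by (1), to treat a generator $x^{p^e}m$ of $\fm^{[p^e]}M$ with $x\in\fm$: here $F^e_*(x^{p^e}m)=x\cdot F^e_*m$ inside $F^e_*M$, so $\varphi(F^e_*(x^{p^e}m))=x\,\varphi(F^e_*m)\in\fm$ for every $\varphi$, whence $x^{p^e}m\in I_e(M)$.

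For part (3) I would set $d=e-e'$ and use the factorization $F^e_*M=F^{e'}_*(F^d_*M)$, under which $F^e_*\eta=F^{e'}_*(F^d_*\eta)$, arguing by contraposition. If $\eta\notin I_{e'}(M)$, choose $\psi\in\Hom_R(F^{e'}_*M,R)$ with $\psi(F^{e'}_*\eta)$ a unit, rescaled to $1$. Applying the functor $F^d_*$ gives an $R$-linear map $F^d_*\psi\colon F^e_*M\to F^d_*R$ with $F^d_*\psi(F^e_*\eta)=F^d_*1$. Postcomposing with an $R$-linear splitting $\sigma\colon F^d_*R\to R$ of Frobenius (so $\sigma(F^d_*1)=1$) produces $\sigma\circ F^d_*\psi\in\Hom_R(F^e_*M,R)$ sending $F^e_*\eta$ to $1$, so $\eta\notin I_e(M)$. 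The single genuine input is the existence of $\sigma$, which amounts to $R$ being $F$-split; this holds for the $F$-pure, in particular strongly $F$-regular, rings under consideration and is the point where more than $F$-finiteness enters. I expect this to be the \emph{main obstacle}: some splitting hypothesis is essential, as the inclusion can fail otherwise (for $R=k[x]/(x^2)$ one computes $I_1=R\supsetneq\fm=I_0$).

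For part (4) I would assume $R$ strongly $F$-regular and $M$ torsion-free, fix $0\neq\eta\in M$, and exhibit an $e$ with $\eta\notin I_e(M)$. Since $R$ is a domain and $M$ is torsion-free, $M$ embeds in $M\otimes_R\operatorname{Frac}(R)$, so a fraction-field functional nonzero on $\eta$, cleared of denominators, gives $\pi\in\Hom_R(M,R)$ with $c:=\pi(\eta)\neq0$. Pushing forward yields $F^e_*\pi\colon F^e_*M\to F^e_*R$ with $F^e_*\pi(F^e_*\eta)=F^e_*c$. Applying strong $F$-regularity to the nonzero element $c$ furnishes $e$ and $\varphi_0\in\Hom_R(F^e_*R,R)$ with $\varphi_0(F^e_*c)=1$; then $\varphi_0\circ F^e_*\pi$ carries $F^e_*\eta$ to $1$, so $\eta\notin I_e(M)$ and therefore $\bigcap_e I_e(M)=0$.

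In summary, parts (1) and (2) are formal consequences of the $R$-linearity of multiplication-after-Frobenius, part (4) is the conceptual heart and reduces cleanly to the defining property of strong $F$-regularity once one passes from $M$ to $R$ via a separating functional, and part (3) is the technically delicate step, since the chain property is exactly what forces the $F$-splitting of $R$ into the argument.
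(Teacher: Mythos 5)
Your proof of (4) is exactly the paper's argument: use torsion-freeness to produce $\pi\in\Hom_R(M,R)$ with $\pi(\eta)=c\neq 0$, then compose $F^e_*\pi$ with a map $F^e_*R\to R$ sending $F^e_*c$ to $1$, supplied by strong $F$-regularity applied to $c$. The paper leaves (1)--(3) to the reader, and your arguments for (1) and (2) are the standard ones (precomposition with the $R$-linear multiplication operators on $F^e_*M$). Your remark on (3) is a genuine and worthwhile catch rather than an obstacle you failed to clear: the containment $I_e(M)\subseteq I_{e'}(M)$ really does require a splitting $F^{e-e'}_*R\to R$ of Frobenius, and your example $R=k[x]/(x^2)$ --- where $\fm\cdot F_*R=0$ forces every $\varphi\in\Hom_R(F_*R,R)$ to land in the socle, so $I_1(R)=R\supsetneq \fm=I_0(R)$ --- shows that (3) as literally stated can fail for a general local $F$-finite ring. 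Since the lemma is only ever invoked in the paper for strongly $F$-regular (hence $F$-split) rings, none of the applications are affected, but the hypotheses of (3) should be read as including $F$-splitness; your proof is then complete and correct.
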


\begin{proof}
Properties (1)-(3) are straight-forward and left to the reader to verify. For $(4)$ let $\eta\in M$ be a nonzero element and consider an $R$-linear map $\psi\colon M\to R$ such that $\psi(\eta)=r\not = 0$. We are assuming $R$ is strongly $F$-regular, i.e. $\bigcap_{e\in\mathbb{N}}I_e=0$. Hence $r\not \in I_e$ for some $e$ and therefore there exists $\varphi\in \Hom_R(F^e_*R,R)$ so that $\varphi(F^e_*r)=1$. In particular, $\varphi\circ F^e_*\psi\in \Hom_R(F^e_*M,R)$ and $\varphi\circ F^e_*\psi(\eta)=1$. 
\end{proof}

\section{Main results}\label{Section results}

\begin{theorem}
\label{Cohen-Macaulay Lemma}
Let $(R,\fm,k)$ be an $F$-finite and strongly $F$-regular ring of prime characteristic $p>0$. There exists an $e_0\in \mathbb{N}$, depending only on $R$, so that if $M$ is a finitely generated maximal Cohen-Macaulay $R$-module and $\eta\in M \setminus \fm M$, then there exists $\varphi\in \Hom_R(F^{e_0}_*M,R)$ such that $\varphi(F^{e_0}_*\eta)=1$.
\end{theorem}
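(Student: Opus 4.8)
=== SKETCH OF PROOF PLAN ===

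\medskip

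The goal is to find a single $e_0$, depending only on $R$, that works uniformly for every maximal Cohen-Macaulay (MCM) module $M$ and every minimal generator $\eta$. The condition $\varphi(F^{e_0}_*\eta)=1$ for some $\varphi$ is precisely the statement that $\eta \notin I_{e_0}(M)$, so the theorem asserts that $I_{e_0}(M) \subseteq \fm M$ uniformly in $M$. The plan is to prove the contrapositive flavor: show that the chain of submodules $I_e(M)$ stabilizes into $\fm M$ after a bounded number of steps, and that the bound can be chosen independently of $M$.

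\medskip

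First I would reduce to understanding the colengths. Since $M$ is MCM and $F^e_*$ is exact and preserves the MCM property (Frobenius pushforward of an MCM module over an $F$-finite ring is again MCM, as $F^e$ is finite), the key numerical invariant is the rank of the maximal free summand of $F^e_*M$, which by Lemma~\ref{Lemma Krull-Schmidt for rank 1 summands} is well-defined and computable after completion. I would track the quantity $\dim_k \big(M/I_e(M)\big)$, which counts (a version of) the free rank of $F^e_*M$: an element $\eta$ splits off a free summand through $F^e_*$ exactly when $\eta \notin I_e(M)$, and the number of independent such splittings is governed by the dimension of $M/I_e(M)$ over $k$. By Lemma~\ref{splitting ideals}(2)--(3) these form a descending chain containing $\fm^{[p^e]}M$, and by strong $F$-regularity together with part (4) applied to the torsion-free (indeed $(S_2)$) module $M$, we get $\bigcap_e I_e(M) = 0$, hence eventually $I_e(M) \subseteq \fm M$ for each fixed $M$. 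The heart of the matter is making the threshold $e_0$ uniform.

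\medskip

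For uniformity, my plan is to exploit a Noetherian/compactness principle on the finitely many relevant isomorphism classes, or more robustly, to bound things using the $F$-signature machinery. The cleanest route I foresee: reduce to the case where $\eta$ generates a rank $1$ reflexive submodule, use the established correspondence $R(D)$ is a summand of $M$ iff $R$ is a summand of $\Hom_R(M,R(D))$, and translate the splitting of $\eta$ through Frobenius into the splitting of a \emph{single copy of} $R$ from $F^{e_0}_*R$ twisted by a divisor. Because $R$ is strongly $F$-regular, there is a fixed $e_0$ (related to where the $F$-signature is detected, i.e. where $I_{e_0} \subseteq \fm$) for which $F^{e_0}_*R$ has a free summand splitting off any chosen generator. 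The task is to promote this from the ring $R$ to an arbitrary MCM $M$ by observing that a minimal generator $\eta \in M \setminus \fm M$ yields an inclusion $R \hookrightarrow M$ (sending $1 \mapsto \eta$) that is a split injection on the punctured spectrum / in codimension one, and then using the MCM ($S_2$) hypothesis to lift a splitting constructed generically to a genuine splitting of $F^{e_0}_*M \to R$.

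\medskip

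The main obstacle, I expect, is precisely this last lifting/uniformity step: controlling $e_0$ independently of the rank and the particular reflexive structure of $M$. A single MCM module could have arbitrarily large rank, so one cannot naively quote a finiteness of isomorphism classes. I anticipate the real content is a \emph{quantitative} splitting estimate — showing that the obstruction to splitting $\eta$ off $F^e_*M$ decays at a rate controlled by the $F$-signature $s(R) > 0$ and the singularity measured by $\fm^{[p^e]} \subseteq I_e$, so that once $p^e$ exceeds a bound determined solely by $R$ (e.g. where $\fm^{[p^{e_0}]} \subseteq I_{e_0} = \mathfrak{m}$-primary splitting ideal stabilizes), the splitting exists for \emph{any} $M$ simultaneously. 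Establishing that this decay rate and threshold depend only on $R$, not on $M$, is the crux; I would attack it by reducing modulo a fixed system of parameters (using the MCM regular-sequence property to preserve exactness) to pass to an Artinian situation where the splitting ideals $I_e(M)$ can be compared against the fixed $I_e$ of $R$ via a uniform colength bound.
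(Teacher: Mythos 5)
You have identified the right reformulation ($I_{e_0}(M)\subseteq\fm M$ uniformly in $M$) and the right difficulty (uniformity of $e_0$), but the proposal is missing the idea that actually delivers the uniformity, and two of the steps you lean on do not hold up. The claim that a minimal generator $\eta\in M\setminus\fm M$ gives an inclusion $R\hookrightarrow M$, $1\mapsto\eta$, that is split in codimension one is false: for $R=k[[x,y,z]]/(xy-z^2)$, $M=\fp=(x,z)$, and $\eta=z$, localizing at $\fq=(y,z)$ gives $M_\fq=R_\fq$ (as $x$ is a unit there) while $z\in\fq M_\fq$, so $1\mapsto z$ is not split at $\fq$. The proposed endgame --- a ``decay rate controlled by $s(R)>0$'' --- is never made into an argument, and within this paper it would also be circular, since positivity of the $F$-signature is \emph{deduced} from Theorem~\ref{Cohen-Macaulay Lemma} in Corollary~\ref{corollary Aberbach and Leuschke}. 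Most importantly, your plan to reduce modulo a system of parameters and ``compare $I_e(M)$ against the fixed $I_e$ of $R$'' has no mechanism attached: for a general (non-Gorenstein) $R$ there is no embedding of an arbitrary maximal Cohen--Macaulay module $M$ into a free module with maximal Cohen--Macaulay cokernel, so there is nothing that transports a minimal generator of $M$ to an element of a fixed module lying outside $(\ux)$.

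The missing device is canonical duality. The paper's proof presents $\Hom_R(M,R(K_X))$ as a quotient of $R^{\oplus n}$ with maximal Cohen--Macaulay kernel $N$ and dualizes back into $R(K_X)$ to obtain an exact sequence $0\to M\to R(K_X)^{\oplus n}\to C\to 0$ with $C=\Hom_R(N,R(K_X))$ again maximal Cohen--Macaulay; since $\Tor_1^R(R/(\ux),C)=0$, this yields $(\ux)M=M\cap(\ux)R(K_X)^{\oplus n}$, so any $\eta\in M\setminus\fm M$ has some coordinate in $R(K_X)^{\oplus n}$ avoiding $(\ux)R(K_X)$. Chevalley's lemma is then applied to the \emph{single fixed module} $R(K_X)$, whose descending chain $I_e(R(K_X))$ has zero intersection by Lemma~\ref{splitting ideals}(4), producing one $e_0$ with $I_{e_0}(R(K_X))\subseteq(\ux)R(K_X)$; composing a splitting $F^{e_0}_*R(K_X)\to R$ with the coordinate projection finishes the proof. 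Your instinct to reduce modulo $\ux$ and exploit Tor-vanishing for maximal Cohen--Macaulay modules is exactly right, and in the Gorenstein case (where $R(K_X)\cong R$) your comparison against $I_e$ of $R$ would go through essentially verbatim; but without the embedding into copies of the canonical module the uniformity step remains a genuine gap in general.
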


\begin{proof}
Begin by surjecting a finitely generated free module $R^{\oplus n}$ onto $\Hom_R(M,R(K_X))$ and then consider a short exact of the following form:
\[
0\to N\to R^{\oplus n}\to \Hom_R(M,R(K_X))\to 0.
\]
The modules $\Hom_R(M,R(K_X))$ and $N$ are maximal Cohen-Macaulay. Hence there is a short exact sequence 
\[
0\to M\to R(K_X)^{\oplus n}\to \Hom_R(N,R(K_X))\to 0
\] 
obtained by applying $\Hom_R(-,R(K_X))$, see \cite[Theorem~3.3.10]{BrunsHerzog}. Let $C$ denote the maximal Cohen-Macaulay $R$-module $\Hom_R(N,R(K_X))$ and let $\underline{x}=x_1,\ldots,x_d$ be a full system of parameters of $R$. Then $\Tor_1^R(R/(\underline{x}), C))=0$ since $C$ is maximal Cohen-Macaulay and $\Tor_1^R(R/(\underline{x}), C))$ agrees with the first Koszul homology on the $C$-regular sequence $x_1,\ldots, x_d$. Hence there is short exact sequence
\[
0\to \frac{M}{(\underline{x})M}\to \frac{R(K_X)^{\oplus n}}{(\underline{x})R(K_X)^{\oplus n}}\to \frac{C}{(\underline{x})C}\to 0.
\]
In particular, under the inclusion $M\to R(K_X)^{\oplus n}$ we find that 
\begin{align}\label{Trivial Artin-Rees}
(\underline{x})M=M\cap (\underline{x}) R(K_X)^{\oplus n}.
\end{align}

By (3) and (4) of Lemma~\ref{splitting ideals}, utilizing faithfully flat descent to the completion of $R$, and Chevalley's Lemma, \cite[Section~2, Lemma~7]{Chevalley}, there exists an $e_0\in \mathbb{N}$ so that $I_{e_0}(R(K_X))\subseteq (\underline{x})R(K_X)$. Suppose that $\eta \in M\setminus \fm M$. Then under the inclusion $M\to R(K_X)^{\oplus n}$ we find that $\eta\not \in (\underline{x})R(K_X)^{\oplus n}$ by (\ref{Trivial Artin-Rees}). In particular, $\eta$ avoids $I_{e_0}(R(K_X))^{\oplus n}$ and hence there is a commutative diagram 
\[
\begin{tikzcd}
F^{e_0}_*M \arrow[r] \arrow[dr, "\varphi"']
& F^{e_0}_*R(K_X) \arrow[d, "\psi"]\\
& R
\end{tikzcd}
\]
so that $\varphi(F^{e_0}_*\eta)=\psi(F^{e_0}_*\eta)=1$.
\end{proof}

The first application given of Theorem~\ref{Cohen-Macaulay Lemma} is that the divisor class group of a $2$-dimensional strongly $F$-regular ring is finite. Before this article, the proof that $2$-dimensional strongly $F$-regular rings would have gone as follows: every strongly $F$-regular ring is weakly $F$-regular, in particular is $F$-rational. Hence $R$ has pseudorational singularities by \cite[Theorem~3.1]{Smith}. But every $F$-finite ring is excellent by \cite[Theorem~1]{KunzExcellent} and therefore has rational singularities since resolution of singularities of excellent surfaces is known by \cite{LipmanDesingularization}. Every $2$-dimensional rational surface singularity has finite divisor class group by \cite[Proposition~17.1]{LipmanRational}, a result that requires classifying minimal resolution of singularities of rational surfaces by quadratic transforms.

For the following corollaries we remind the reader that divisorial ideals in a normal domain $R$ are torsion-free and therefore have the same dimension as $R$.

\begin{corollary}
\label{corollary Lipman}
Let $(R,\fm,k)$ be an $F$-finite and strongly $F$-regular ring of prime characteristic $p>0$ and Krull dimension $2$. Then the divisor class group of $R$ is finite.
\end{corollary}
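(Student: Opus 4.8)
The plan is to derive finiteness straight from Theorem~\ref{Cohen-Macaulay Lemma}, using the fact that in Krull dimension $2$ every divisorial ideal is maximal Cohen--Macaulay. First I would observe that since $R$ is a $2$-dimensional Cohen--Macaulay normal domain, any divisorial ideal $R(D)$ is a rank $1$ module satisfying $(S_2)$, so it has depth $2=\dim R$ and is therefore a maximal Cohen--Macaulay module. Hence Theorem~\ref{Cohen-Macaulay Lemma} applies to $M=R(D)$: taking $\eta$ to be a minimal generator produces a surjection $F^{e_0}_*R(D)\to R$, and this surjection splits because $R$ is free. Thus, for the single integer $e_0$ attached to $R$, the ring $R$ is a direct summand of $F^{e_0}_*R(D)$ for \emph{every} Weil divisor $D$.

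The heart of the argument is to upgrade this to the statement that one fixed module has every $R(D)$ as a summand. Fix a divisor $D$ and apply the previous paragraph with $-p^{e_0}D$ playing the role of the inner divisor, writing $F^{e_0}_*R(-p^{e_0}D)\cong R\oplus N$. I would then tensor over $R$ with $R(D)$ and pass to the reflexive hull $\Hom_R(\Hom_R(-,R),R)$. Since reflexification is an additive functor it preserves direct summands, and by the formula recorded in Section~\ref{Section prelims} the reflexive hull of $(F^{e_0}_*R(-p^{e_0}D))\otimes_R R(D)$ is $F^{e_0}_*R(-p^{e_0}D+p^{e_0}D)=F^{e_0}_*R$, while the reflexive hull of $R\otimes_R R(D)$ is $R(D)$. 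Therefore $R(D)$ is a direct summand of the \emph{fixed} module $F^{e_0}_*R$, for every divisor $D$.

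Finally I would conclude by a rank count. The module $F^{e_0}_*R$ is maximal Cohen--Macaulay, in particular $(S_2)$, and it has finite rank $r=\rank_R F^{e_0}_*R$ because $R$ is $F$-finite. If $D_1,\ldots,D_t$ represent pairwise distinct classes of $\Cl(R)$, then by the previous paragraph each $R(D_i)$ is a summand of $F^{e_0}_*R$, so Corollary~\ref{Corollary Krull-Schmidt for rank 1 summands} shows that $R(D_1)\oplus\cdots\oplus R(D_t)$ is a summand of $F^{e_0}_*R$. Comparing ranks gives $t\le r$, whence $\Cl(R)$ has at most $r$ elements and is finite. (Alternatively one may simply invoke Krull--Schmidt after completion: $F^{e_0}_*R$ admits only finitely many non-isomorphic indecomposable summands, hence only finitely many non-isomorphic rank $1$ summands $R(D)$, which again bounds the number of divisor classes.)

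The step I expect to be the main obstacle is the passage from ``$R$ is a summand of $F^{e_0}_*R(D_1)$ for all $D_1$'' to ``$R(D)$ is a summand of a single fixed module''; this hinges on the correct choice $D_1=-p^{e_0}D$ together with the verification that the operation ``tensor with $R(D)$, then reflexify'' transports the splitting of $F^{e_0}_*R(-p^{e_0}D)$ to a splitting of $F^{e_0}_*R$. Everything else is the dimension-$2$ maximal Cohen--Macaulay observation and an elementary rank bound.
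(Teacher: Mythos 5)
Your proposal is correct and follows essentially the same route as the paper's own proof: the dimension-$2$ observation that divisorial ideals are maximal Cohen--Macaulay, the application of Theorem~\ref{Cohen-Macaulay Lemma} to $R(-p^{e_0}D)$, the reflexification of $F^{e_0}_*R(-p^{e_0}D)\otimes_R R(D)$ to realize $R(D)$ as a summand of the fixed module $F^{e_0}_*R$, and then Corollary~\ref{Corollary Krull-Schmidt for rank 1 summands} together with a rank count. The only difference is that you spell out a few justifications (additivity of reflexification, the explicit rank bound $t\le r$) that the paper leaves implicit.
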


\begin{proof}
Every divisorial ideal in a $2$-dimensional normal domain is Cohen-Macaulay. By Theorem~\ref{Cohen-Macaulay Lemma} there is an $e_0\in \mathbb{N}$ so that if $D$ is any Weil divisor then $F^{e_0}_*R(-p^{e_0}D)$ has a free summand. Reflexifying $F^{e_0}_*R(-p^{e_0}D)\otimes_R R(D)$ then shows $R(D)$ is a direct summand of $F^{e_0}_*R$. Therefore every divisorial ideal can be realized as direct summand of $F^{e_0}_*R$. By Corollary~\ref{Corollary Krull-Schmidt for rank 1 summands}, if $D_1,\ldots, D_t$ are divisors, no two of which are linearly equivalent, then $R(D_1)\oplus\cdots \oplus R(D_t)$ is a direct summand of $F^{e_0}_*R$. By rank considerations on $F^{e_0}_*R$ there can only be finitely many many Weil divisors up to linear equivalence. 
\end{proof}

\begin{corollary}
\label{corollary finite torsion subgroup}
Let $(R,\fm,k)$ be an $F$-finite and strongly $F$-regular ring of prime characteristic $p>0$. Then the torsion subgroup of the divisor class group of $R$ is finite.
\end{corollary}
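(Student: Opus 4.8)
The plan is to produce a \emph{single} finitely generated $(S_2)$-module that contains every torsion divisorial ideal as a direct summand, and then bound the number of torsion classes by a rank count through Corollary~\ref{Corollary Krull-Schmidt for rank 1 summands}. The natural candidate for this ambient module is $F^{e_0}_*R$, where $e_0$ is the integer furnished by Theorem~\ref{Cohen-Macaulay Lemma} (note $F^{e_0}_*R$ is maximal Cohen-Macaulay, hence $(S_2)$). So the first reduction is: it suffices to show that for every torsion Weil divisor $D$ the divisorial ideal $R(D)$ is a direct summand of $F^{e_0}_*R$. Granting this, if $D_1,\ldots,D_t$ represent distinct torsion classes then each $R(D_i)$ is a summand of $F^{e_0}_*R$, so Corollary~\ref{Corollary Krull-Schmidt for rank 1 summands} shows $R(D_1)\oplus\cdots\oplus R(D_t)$ is a summand of $F^{e_0}_*R$; comparing ranks gives $t\le \rank_R F^{e_0}_*R$, and the torsion subgroup is finite.

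To realize $R(D)$ as a summand of $F^{e_0}_*R$ I would mimic the mechanism of Corollary~\ref{corollary Lipman}. By the reflexive-tensor identity recalled in Section~\ref{Section prelims}, the reflexification of $F^{e_0}_*R(-p^{e_0}D)\otimes_R R(D)$ is $F^{e_0}_*R$; hence if $F^{e_0}_*R(-p^{e_0}D)$ has a free direct summand, then tensoring that splitting with $R(D)$ and reflexifying exhibits $R(D)$ as a direct summand of $F^{e_0}_*R$. Everything thus comes down to producing a free summand of $F^{e_0}_*R(-p^{e_0}D)$, which Theorem~\ref{Cohen-Macaulay Lemma} delivers the moment we know $R(-p^{e_0}D)$ is maximal Cohen-Macaulay. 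Since $-p^{e_0}D$ is again torsion, and $e_0$ is fixed before $D$ is chosen, there is no circularity.

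The heart of the matter, and the step I expect to be the main obstacle, is therefore the following: every divisorial ideal whose class is torsion is maximal Cohen-Macaulay. Unlike the two-dimensional case this is not automatic in higher dimension. The device I would use is the cyclic cover attached to a torsion class: if $D$ has order $n$, fix an isomorphism $R(nD)\cong R$ and form the module-finite $R$-algebra $S=\bigoplus_{i=0}^{n-1}R(iD)$ with multiplication given by the reflexive products. Then $S$ is a normal domain with $\dim S=\dim R$, and $R(D)$ is an $R$-module direct summand of $S$; consequently it is enough to prove that $S$ is Cohen-Macaulay, for then $S$ is maximal Cohen-Macaulay over $R$ and so is each summand $R(iD)$.

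Finally I would establish Cohen-Macaulayness of $S$ by showing that $S$ is itself strongly $F$-regular. The key geometric input is that the cover $\Spec S\to\Spec R$ is unramified in codimension one, since each $R(iD)$ is locally free at every height-one prime; strong $F$-regularity of $R$ should then ascend to $S$ by extending the splittings $\varphi\in\Hom_R(F^e_*R,R)$ across this quasi-étale extension and applying them to a norm of a chosen element of $S$. This ascent is the one point lying outside the self-contained framework of the preceding sections, and carrying it out by an honest splitting argument built from the strong $F$-regularity of $R$ — rather than quoting it as a black box — is where the real work lies. Once $S$ is known to be Cohen-Macaulay the argument closes exactly as in the reductions above.
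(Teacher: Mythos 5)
Your overall architecture coincides with the paper's: the reduction to exhibiting each torsion $R(D)$ as a direct summand of $F^{e_0}_*R$, the reflexified-tensor trick applied to $F^{e_0}_*R(-p^{e_0}D)\otimes_R R(D)$, and the final count via Corollary~\ref{Corollary Krull-Schmidt for rank 1 summands} together with the rank of $F^{e_0}_*R$ are exactly the steps of Corollary~\ref{corollary Lipman}, which the paper's proof simply invokes. The one input the paper does not reprove --- that every torsion divisorial ideal over a strongly $F$-regular ring is maximal Cohen--Macaulay --- it takes from the literature (\cite{PatakfalviSchwede}, \cite{DaoSe}, cf.\ \cite{Watanabe}), whereas you attempt to derive it from the cyclic cover $S=\bigoplus_{i=0}^{n-1}R(iD)$.

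That derivation has a genuine gap precisely in the case the corollary claims as new: $p$-torsion. At a height-one prime $\fp$ one does get $S_{\fp}\cong R_{\fp}[t]/(t^n-u)$ with $u$ a unit, but when $p\mid n$ this is \emph{not} unramified --- the derivative $nt^{n-1}$ vanishes identically --- so the cover is inseparable in codimension one, and $S$ can even fail to be reduced or normal. Consequently the quasi-\'etale ascent of strong $F$-regularity (extending splittings across the cover via the trace, which is Watanabe's argument and genuinely requires $p\nmid n$) is unavailable, and your route only establishes Cohen--Macaulayness of $R(D)$, hence finiteness, for the prime-to-$p$ part of the torsion subgroup. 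That much was already known (\cite[Corollary~5.1]{Carvajal-Rojas}), and the paper explicitly flags that the content of this corollary beyond prior work is that it captures the entire torsion subgroup, $p$-torsion included. To close the gap you must either quote the depth results of \cite{PatakfalviSchwede} or \cite{DaoSe}, valid for arbitrary torsion divisors, as the paper does, or supply a genuinely different argument for divisors whose order is divisible by $p$.
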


\begin{proof}
If $D$ is torsion divisor in a strongly $F$-regular ring $R$ then the fractional ideal $R(D)$ is Cohen-Macaulay, see \cite[Corollary~3.3]{PatakfalviSchwede} and \cite[Corollary~3.12]{DaoSe}, cf \cite[Corollary~2.9]{Watanabe}. By Theorem~\ref{Cohen-Macaulay Lemma} there exists an $e_0\in \mathbb{N}$ so that if $D$ is a torsion divisor then $F^{e_0}_*R(-p^{e_0}D)$ has a free summand. The proof now follows as in the proof of Corollary~\ref{corollary Lipman}. 
\end{proof}

Finiteness of the prime-to-$p$ subgroup of the torsion group of an $F$-finite local ring with algebraically closed residue field is something observed in \cite[Corollary~5.1]{Carvajal-Rojas}. However, the techniques of Corollary~\ref{corollary finite torsion subgroup} are more elementary, do not require the residue field to be algebraically closed, and show finiteness of the entire torsion subgroup.

The last result we recapture is Aberbach's and Leuschke's theorem that the $F$-signature of a local strongly $F$-regular ring is positive. The $F$-signature of a ring of prime characteristic is the asymptotic measurement of the number of free summands of $F^e_*R$ as compared to the generic rank of $F^e_*R$. The study of $F$-signature originates in \cite{SmithVanDenBergh, HunekeLeuschke}.  When $(R,\fm,k)$ is local of dimension $d$ and $\ell(M)$ denotes the length of finite length $R$-module $M$, then the $F$-signature of $R$ is realized as the limit
\[
s(R)=\lim_{e\to \infty}\frac{\ell(R/I_e)}{p^{ed}},
\]
a limit which always exists by \cite{Tucker}, cf \cite{PolstraTucker}. 

Before reestablishing the positivity of the $F$-signature of strongly $F$-regular rings we briefly discuss the related prime characteristic invariant Hilbert-Kunz multiplicity. If $(R,\fm,k)$ is local ring of prime characteristic $p>0$ and Krull dimension $d$ then the Hilbert-Kunz multiplicity of $R$ is the limit
\[
\e(R)=\lim_{e\to \infty}\frac{\ell(R/\fm^{[p^e]})}{p^{ed}},
\]
a limit which always exists by \cite{Monsky}. The Hilbert-Kunz multiplicity of a local ring is bounded below by $1$. With this simple fact in mind we are prepared to recapture \cite[Main Result]{AberbachLeuschke}.

\begin{corollary}
\label{corollary Aberbach and Leuschke}
Let $(R,\fm,k)$ be an $F$-finite and strongly $F$-regular ring of prime characteristic $p>0$ and Krull dimension $d$. Then $s(R)>0$.
\end{corollary}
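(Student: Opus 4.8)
The plan is to deduce positivity directly from Theorem~\ref{Cohen-Macaulay Lemma} by applying it to the maximal Cohen-Macaulay modules $M=F^e_*R$ and translating the resulting splittings back into a statement about the ideals $I_e=I_e(R)$. First I would observe that Theorem~\ref{Cohen-Macaulay Lemma} is equivalent to the assertion that $I_{e_0}(M)\subseteq \fm M$ for every finitely generated maximal Cohen-Macaulay module $M$: indeed, an element $\eta$ lies outside $I_{e_0}(M)$ exactly when some $\varphi\in\Hom_R(F^{e_0}_*M,R)$ sends $F^{e_0}_*\eta$ to $1$, and by the theorem this holds for every $\eta\in M\setminus \fm M$.

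Next I would specialize to $M=F^e_*R$. Since $R$ is Cohen-Macaulay, each $F^e_*R$ is a maximal Cohen-Macaulay $R$-module: a system of parameters $x_1,\ldots,x_d$ of $R$ acts on $F^e_*R$ as multiplication by $x_1^{p^e},\ldots,x_d^{p^e}$, which is again a regular sequence on $R$, so the reformulation applies. The key bookkeeping step, which I expect to be the main obstacle, is to identify the two sides of the inclusion $I_{e_0}(F^e_*R)\subseteq \fm\, F^e_*R$ as subsets of $R$. Using $F^{e_0}_*(F^e_*R)=F^{e+e_0}_*R$ and $\Hom_R(F^{e_0}_*F^e_*R,R)=\Hom_R(F^{e+e_0}_*R,R)$, the submodule $I_{e_0}(F^e_*R)$ is exactly $I_{e+e_0}$, while $\fm\, F^e_*R$ corresponds to $\fm^{[p^e]}$ because $m\cdot F^e_*r=F^e_*(m^{p^e}r)$. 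Hence the theorem yields the uniform containment $I_{e+e_0}\subseteq \fm^{[p^e]}$ for all $e\geq 0$.

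Finally I would compare lengths. The containment gives a surjection $R/I_{e+e_0}\twoheadrightarrow R/\fm^{[p^e]}$, so $\ell(R/I_{e+e_0})\geq \ell(R/\fm^{[p^e]})$. Dividing by $p^{(e+e_0)d}$ and letting $e\to\infty$, the left-hand side converges to $s(R)$ (reindexing the defining sequence is harmless) while the right-hand side converges to $\e(R)/p^{e_0d}$. Since $\e(R)\geq 1$, this produces the explicit bound
\[
s(R)\ \geq\ \frac{\e(R)}{p^{e_0 d}}\ \geq\ \frac{1}{p^{e_0 d}}\ >\ 0.
\]
The only genuine subtlety is the identification carried out in the previous paragraph; once the containment $I_{e+e_0}\subseteq\fm^{[p^e]}$ is established, positivity is immediate from the existence of the limits defining $s(R)$ and $\e(R)$ together with the elementary bound $\e(R)\geq 1$.
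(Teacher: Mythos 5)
Your proposal is correct and follows essentially the same route as the paper: both reduce to the uniform containment $I_{e+e_0}\subseteq \fm^{[p^e]}$ by applying Theorem~\ref{Cohen-Macaulay Lemma} to the maximal Cohen--Macaulay module $F^e_*R$, and then compare $s(R)$ with $\e(R)/p^{e_0 d}$. Your reformulation via $I_{e_0}(F^e_*R)=F^e_*I_{e+e_0}$ and $\fm\,F^e_*R=F^e_*\fm^{[p^e]}$ is just a more explicit phrasing of the paper's one-line verification.
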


\begin{proof}
Let $e_0$ be as in Theorem~\ref{Cohen-Macaulay Lemma}. Then $I_{e+e_0}\subseteq \fm^{[p^e]}$ for all $e\in \mathbb{N}$. Indeed, if $r\in R\setminus \fm^{[p^e]}$ then $F^e_*r\in F^e_*R\setminus \fm F^{e}_*R$. Hence by Theorem~\ref{Cohen-Macaulay Lemma} there exists an $R$-linear map $F^{e+e_0}_*R\to R $ mapping $F^{e+e_0}_*r$ to $1$, i.e., $r\not \in I_{e+e_0}$. 

It now follows that the $F$-signature of $R$ is positive since
\[
s(R)=\lim_{e\to \infty}\frac{\ell(R/I_{e+e_0})}{p^{(e+e_0)d}}\geq \lim_{e\to \infty}\frac{\ell(R/\fm^{[p^{e}]})}{p^{(e+e_0)d}}=\frac{\e(R)}{p^{e_0d}}>0.
\]
\end{proof}

 Let $(R,\fm,k)$ be a local normal domain with divisor class group $\Cl(R)$. If $R$ is essentially of finite type over $\mathbb{C}$ and has rational singularities then $\Cl(R)\otimes_\mathbb{Z}\mathbb{Q}$ is finitely generated over $\mathbb{Q}$, see \cite[Lemma~1]{Kawamata}. Since $F$-rational rings have pseudorational singularities one might hope an analogous statement holds for local $F$-finite rings with $F$-rational singularities. In particular, one might also expect local strongly $F$-regular rings to have finitely generated divisor class groups by Corollary~\ref{corollary finite torsion subgroup}.

\begin{question}
\label{Question finite generation of divisor class group}
Let $(R,\fm,k)$ be a local $F$-finite and $F$-rational ring of prime characteristic $p>0$ with divisor class group $\Cl(R)$. Is $\Cl(R)\otimes_\mathbb{Z}\mathbb{Q}$ a finitely generated $\mathbb{Q}$-vector space? Moreover, do local strongly $F$-regular rings have finitely generated divisor class group?
\end{question}

 Utilizing results of Boutot, \cite{Boutot}, along with local uniformization of threefolds, \cite{Abhyankar,CossartPiltant},  Javier Carvajal-Rojas and Axel St\"{a}bler have recently answered Question~\ref{Question finite generation of divisor class group} for $3$-dimensional $F$-finite rings of prime characteristic with perfect residue field, see \cite[Section~4]{Carvajal-RojasStabler}. However, even if local uniformization is shown to hold in higher dimensions, the methods of \cite{Carvajal-RojasStabler} will not suffice to answer Question~\ref{Question finite generation of divisor class group} for rings of dimension greater than $3$.

\bibliographystyle{alpha}
\bibliography{References}

\end{document}